\documentclass[12pt]{amsart}
\usepackage{geometry} 
\geometry{a4paper} 
\usepackage{graphicx}
\usepackage{amsfonts}
\usepackage{amsthm}
\usepackage{amsmath}
\usepackage{amssymb}
\input cyracc.def
\DeclareFontFamily{U}{russian}{}
\DeclareFontShape{U}{russian}{m}{n}
        { <5><6> wncyr5
          <7><8><9> wncyr7
          <10><10.95><12><14.4><17.28><20.74><24.88> wncyr10 }{}
\DeclareSymbolFont{Russian}{U}{russian}{m}{n}
\DeclareSymbolFontAlphabet{\mathcyr}{Russian}
\makeatletter
\let\@math@cyr\mathcyr
\renewcommand{\mathcyr}[1]{\@math@cyr{\cyracc #1}}
\makeatother
\newcommand{\sha}{\mathcyr{SH}}
\newtheorem{theorem}{Theorem}

\newtheorem{lemma}[theorem]{Lemma}
\newtheorem{proposition}[theorem]{Proposition}
\theoremstyle{definition}

\newtheorem{remark}[theorem]{Remark}
\newcommand{\co}{\colon\,}
\newcommand{\bA}{\mathbb A}
\newcommand{\bT}{\mathbb T}
\newcommand{\bR}{\mathbb R}
\newcommand{\bC}{\mathbb C}
\newcommand{\bG}{\mathbb G}
\newcommand{\bK}{\mathbb K}
\newcommand{\bZ}{\mathbb Z}
\newcommand{\bP}{\mathbb P}
\newcommand{\bH}{\mathbb H}

\newcommand{\bS}{\mathbb S}
\newcommand{\cA}{\mathcal A}
\newcommand{\cD}{\mathcal D}
\newcommand{\cE}{\mathcal E}
\newcommand{\cF}{\mathcal F}

\newcommand{\cI}{\mathcal I}

\newcommand{\cL}{\mathcal L}
\newcommand{\cO}{\mathcal O}
\newcommand{\cP}{\mathcal P}

\newcommand{\cV}{\mathcal V}

\newcommand{\tK}{\widetilde K}

\newcommand{\tI}{\widetilde{\mathrm I}}
\newcommand{\tIA}{\widetilde{\mathrm{IA}}}
\newcommand{\pt}{\text{pt}}
\newcommand{\lp}{\textup{(}}
\newcommand{\rp}{\textup{)}}

\newcommand{\Gal}{\operatorname{Gal}}

\newcommand{\Br}{\operatorname{Br}}

\newcommand{\sn}{\operatorname{sn}}

\newcommand{\Spec}{\operatorname{Spec}}
\newcommand{\Pic}{\operatorname{Pic}}

\newcommand{\id}{\text{id}}
\newcommand{\ts}{\widetilde{\sigma}}

\renewcommand{\top}{^\text{top}}
\setcounter{secnumdepth}{5}


\title[Algebraic T-duality]{Algebraic $K$-theory and derived\\
equivalences suggested by\\ T-duality for torus orientifolds}

\author{Jonathan Rosenberg}
\address{Department of Mathematics\\
University of Maryland\\
College Park, MD 20742-4015, USA} 
\email[Jonathan Rosenberg]{jmr@math.umd.edu}
\thanks{Partially supported by NSF grant DMS-1206159.}
\dedicatory{In honor of the birthday of my good friend Chuck Weibel}
  
\begin{document}

\begin{abstract}
  We show that certain isomorphisms of (twisted)
  $KR$-groups that underlie T-dualities of torus
  orientifold string theories have purely algebraic
  analogues in terms of algebraic $K$-theory of real varieties and
  equivalences of derived categories of (twisted) coherent
  sheaves. The most interesting conclusion is a kind of Mukai duality
  in which the ``dual abelian variety'' to a smooth
  projective genus-$1$ curve over $\bR$ with no real points is
  (mildly) noncommutative. 
\end{abstract}
\keywords{orientifold, $KR$-theory, elliptic curve, twisting, Mukai duality,
  T-duality, derived   category, coherent sheaves, algebraic
  $K$-theory, real algebraic geometry, Azumaya algebra} 
\subjclass[2010]{Primary 14F05; Secondary 19E08 19L64  81T30  14F22  
  14H60  14H81.} 

\maketitle

\section{Introduction}
\label{sec:intro}

\subsection{Background}
\label{sec:background}
This paper grew out of joint work with Charles Doran and Stefan
Mendez-Diez \cite{Doran:2013sxa,MR3316647}, in which we studied type II
orientifold string theories on circles and $2$-tori. In these
theories, D-brane charges lie in twisted $KR$-groups of $(X,\iota)$,
where $X$ is the spacetime manifold and $\iota$ is the involution on
$X$ defining the orientifold structure. (That D-brane charges for
orientifolds are classified by $KR$-theory was pointed out in 
\cite[\S5.2]{Witten:1998}, \cite{Hori:1999me}, and \cite{Gukov:1999},
but twisting (as defined in
\cite{MoutuouThesis,2011arXiv1110.6836M,2012arXiv1202.2057M} and
\cite{Doran:2013sxa}) may arise due to the $B$-field, 
as in \cite{Witten:1998-02}, and/or the charges of the
$O$-planes, as explained in \cite{Doran:2013sxa}.) 
These orientifold theories were found in \cite{MR3316647} to
split up into a number of T-duality groupings, with the theories in
each grouping all related to one another by various T-dualities. The
twisted $KR$-groups attached to each of the theories within a
T-duality grouping were all found to be isomorphic to one another, up
to a degree shift.  Actual isomorphisms of the twisted $KR$-groups 
for the theories within a T-duality group were constructed in
\cite{MR3305978}, 
where we showed that all of these $K$-theory isomorphisms arise from
cases of the real Baum-Connes conjecture for solvable (in fact,
also virtually abelian) discrete groups.  (By the way, these are cases
where the real Baum-Connes conjecture was already known to hold, by
\cite{MR2082090,MR2077669}.)  This sort of duality
involving $KR$-theory already
appeared in \cite[Theorem 2.5]{MR842428} in the case where
the group involved is free abelian. In that theorem, real Baum-Connes
was shown to give an isomorphism from $KO$-homology of a torus to
$KR$-cohomology of another torus of the same dimension (really the
T-dual torus).  This isomorphism turns out to be related to T-duality
between the type I and type IA string theories on the circle.

The work of Karoubi and Weibel \cite{MR1958527} showed that
$KR$-theory of complex algebraic varieties (equipped with an
antiholomorphic involution) is closely related to the algebraic
$K$-theory of the associated \emph{real} algebraic variety.  This
raised the tantalizing possibility that T-duality of orientifolds is
closely related to isomorphisms of algebraic $K$-groups of real
varieties or to equivalences of derived categories of (twisted)
coherent sheaves on real varieties.  The purpose of the paper is to
verify this conjecture in the case of real affine rational curves
(modeling orientifolds on the circle) and real elliptic curves.

After the preliminary Section \ref{sec:KR}, we begin in Section
\ref{sec:circles} with the case of certain rational affine curves over
$\bR$, which give algebraic models for the four orientifolds on the
circle. We compute the algebraic $K$-theory for all of these
``algebraic circles.'' While
the $K$-theory spectra for dual algebraic circles are not exactly the
same, they do become equivalent after 
introducing finite coefficients. Then in Section \ref{sec:ellcurve},
we consider an algebraic analogue of the duality between the type IIA
orientifold theory on a Klein bottle and the $\tIA$ theory, which
lives on a split elliptic curve over $\bR$ with a nontrivial twist
given by a certain Azumaya algebra, described in Proposition
\ref{prop:Azellcurve}. The main result, Theorem \ref{thm:main},
is an equivalence of derived categories of twisted coherent sheaves,
which is considerably stronger (see for example \cite[Example
  V.3.10.2]{MR3076731}) than a mere isomorphism of algebraic
$K$-groups.  This result can be interpreted as saying that the
``dual abelian variety'' to a smooth projective genus-$1$ curve over
$\bR$ with no real points is a noncommutative Azumaya algebra over a
split elliptic curve over $\bR$.  This result is in many ways
reminiscent of the result of C{\u{a}}ld{\u{a}}raru \cite[Chapter
  4]{MR2700538}, \cite{MR1887894} that in the case of an elliptic
fibration without a 
section, a certain Azumaya algebra over the relative Jacobian
is dual to the original elliptic fibration, and there is an
associated equivalence of derived categories of [twisted] coherent 
sheaves. (The genus-$1$
curve with no real points is like an elliptic fibration without a
section, and the split elliptic curve is its Jacobian, as we will see
in Section \ref{sec:ellcurve}.)  The result is also closely related to
the results of \cite{2014arXiv1409.2580A}, though it doesn't quite
seem to be covered by the hypotheses imposed there.

\subsection{Notational convention}
\label{sec:notation}
Since this paper uses both topological and algebraic $K$-theory, we
need a way to distinguish them.  $K$-groups with the index \emph{up},
such as $K^{-\bullet}$, $KO^{-\bullet}$, $KR^{-\bullet}$, and
$KSC^{-\bullet}$, always denote \emph{topological} $K$-groups. $K$-groups with
the index \emph{down}, such as $K_\bullet(R)$, always denote
\emph{algebraic} $K$-theory of rings or schemes.  (All the schemes
discussed here will be smooth, so it will not be necessary to
distinguish this from $G$-theory or $K'$-theory.)
When we need topological $K$-homology (this only shows up once in the
paper), we denote it by $K\top_\bullet$.
The notation $\bK$ denotes the algebraic $K$-theory
spectrum (of a ring, scheme, etc.), and $\bK\top$ denotes the
(non-connective) topological $K$-theory spectrum.

\subsection{Review of KR-theory}
\label{sec:KR}
We quickly remind the reader about Atiyah's $KR$-theory
\cite{MR0206940}. This is the topological $K$-theory (with compact supports)
of Real vector bundles $E$ over Real spaces $(X, \iota)$. A Real space
is just a locally compact Hausdorff space $X$ equipped with a
self-homeomorphism $\iota$ satisfying $\iota^2=\id_X$. A Real vector
bundle $E$ over such a space is a complex vector bundle equipped with
a conjugate-linear vector bundle automorphism of period $2$
compatible with $\iota$. The
$KR$-theory of $(X, \iota)$ can be identified with the topological
$K$-theory of the real Banach algebra $C_0(X,\iota) =\{f\in C_0(X)
\mid f(\iota(x)) = \overline{f(x)}\}$. We shall use 
the indexing convention of \cite{MR1031992,Doran:2013sxa,MR3316647}:
$\bR^{p,q}$ denotes $\bR^p\oplus 
\bR^q$ with the involution that is $+1$ on the first summand and $-1$
on the second summand, and $S^{p,q}$ denotes the unit sphere in
$\bR^{p,q}$.  Thus $S^{0,2}$, for example, denotes the circle $S^1$
with its antipodal involution.  The two indices $p$ and $q$ play
complementary roles, in that $KR^\bullet(X\times \bR^{p,q}) \cong
KR^{\bullet-p+q}(X)$ \cite[Theorem 2.3]{MR0206940}.
Atiyah \cite[Proposition 3.5]{MR0206940} also
showed that for any locally
compact space $X$, $KR^{-\bullet}(X\times S^{0,2})\cong
KSC^{-\bullet}(X)$, the self-conjugate $K$-theory of Anderson \cite{Anderson}
and Green \cite{MR0164347}.

Karoubi and Weibel \cite[Main Theorem 4.8]{MR1958527} showed that
$KR$-theory is
intimately connected with \emph{algebraic} $K$-theory of algebraic
varieties defined over $\bR$. In fact, if $X$ is a smooth projective
variety defined over $\bR$, then $K_\bullet(X, \bZ/m)\cong 
KR^{-\bullet}(X(\bC), \bZ/m)$ in the stable range $\bullet\ge \dim X$.
Here the set $X(\bC)$ of complex points is given its (compact Hausdorff)
analytic topology and the natural action of $G=\Gal(\bC/\bR)$.
They proved the theorem for $m$ a power of two, which is the most
interesting case, though it also follows for $m$ an odd prime by
Bloch-Kato. The theorem holds for affine varieties as well, provided
one modifies the definition of $KR^{-\bullet}(X(\bC))$ a bit.  Since
$X(\bC)$ is noncompact in this case, one needs to use
homotopy-theoretic $KR$-theory $KR_h$ instead of $KR$-theory with compact
supports. ($KR_h^{-\bullet}(X)\cong KO^{-\bullet}(\pt)$ if $X$ is
equivariantly, but not necessarily properly, contractible, so in this
theory, all affine spaces $\bA^n(\bR)$ behave like $\Spec \bR$.)

\section{Algebraic circle orientifolds}
\label{sec:circles}

\subsection{An algebraic analogue of circle orientifolds}
\label{sec:circle}

We begin with the case of orientifolds on a circle, with the
involution coming from a linear involution on $\bR^2$, restricted to
the unit circle. It was found in \cite{Gao:2010ava,Doran:2013sxa,MR3316647}
that there are four such orientifold theories, known in the physics
literature as types I, $\tI$, IA, and
$\tIA$. These split into two T-duality groupings, one of
which contains theories I and IA, corresponding to the Real spaces
$S^{2,0}$ and $S^{1,1}$, and the other of which contains theories
$\tI$ and $\tIA$, corresponding to the Real spaces
$S^{0,2}$ and $S^{1,1}_{(+,-)}$. Here the subscript $(+,-)$ in
$S^{1,1}_{(+,-)}$ indicates 
that of the two $O$-planes in $S^{1,1}$ (i.e., fixed points for the
involution), one has been given a plus sign (meaning that the
Chan-Paton bundle there is of real type) and one has been given a
minus sign (meaning that the Chan-Paton bundle there is of quaternionic type).

What are the algebraic analogues of these Real spaces?

Let $W$, $X$, and $Y$ be the following smooth affine curves defined over
$\bR$:
\[
\begin{aligned}
W&=\Spec \left(\bR[x,y]/(x^2+y^2-1)\right),\\
X&=\Spec \left(\bR[x,y]/(x^2+y^2+1)\right),\\
Y&=\Spec \left(\bR[v, v^{-1}]\right)\cong \bG_m.
\end{aligned}
\]
After applying base change $\square \rightsquigarrow
\Spec \bC \times_{\Spec \bR} \square \,$, 
$W$, $X$, and $Y$ all become 
$\Spec \left(\bC[v, v^{-1}]\right)\cong \bG_m$. For $Y$ this is obvious.
For $X_\bC$ we use the isomorphism sending $(x,y)\in \bA^2(\bC)$ with
$x^2+y^2+1=0$ to $x+iy\in  \bG_m(\bC)$; note that $z=x+iy$ is
invertible and determines the pair $(x,y)$ since $(x+iy)(iy-x) = 1$,
hence $x = (z - z^{-1})/2$ and $y = (z + z^{-1})/(2i)$.  Similarly,
for $W_\bC$ we use the isomorphism sending $(x,y)\in \bA^2(\bC)$ with
$x^2+y^2-1=0$ to $x+iy\in  \bG_m(\bC)$; note that $z=x+iy$ is
invertible and determines the pair $(x,y)$ since $(x+iy)(x-iy) = 1$,
hence $x = (z + z^{-1})/2$ and $y = (z - z^{-1})/(2i)$.

In accordance with the point of view of Karoubi and Weibel,
we view $W(\bC)$, $X(\bC)$ and $Y(\bC)$ as Real spaces in the sense of
Atiyah. Of course in all cases we just have $\bC^\times$
as the underlying space, but the
Galois action of $G=\Gal(\bC/\bR)$ is different.  On $Y(\bC)$, the
action of $G$ is usual complex conjugation on $\bC^\times$, and the
fixed set (the union of the ``O-planes'') is $\bR^\times = (-\infty, 0)
\sqcup (0,\infty)$.  Thus $Y$ is an algebraic model for the Real space
$S^{1,1}$. On $X(\bC)$, on the other hand, the Galois action must be
free, since $X(\bR)=\emptyset$.  (There are no points $(x,y)\in
\bA^2(\bR)$ with $x^2+y^2+1=0$.)  In fact one can see that when we
identify $X(\bC)$ with $\bC^\times$, $G$ acts by $z\mapsto -\bar z^{-1}$.
$X(\bC)$ has a (non-proper) $G$-equivariant 
deformation retraction down to a copy of $S^1$, identified with
\[
S^1 = \{(ix,iy)\mid  (x,y)\in \bA^2(\bR),\ x^2+y^2 = 1\},
\]
and on this copy of $S^1$, $G$ acts by $(ix,iy)\mapsto (-ix, -iy)$,
i.e., by the antipodal map, so $X$ is an algebraic model for
$S^{0,2}$. The case of $W(\bC)$ is similar, but in this case the Galois
action (if we identify $W(\bC)$ with $\bC^\times$) 
is given by $z\mapsto \bar z^{-1}$, and $G$ acts trivially on
\[
S^1 = \{(x,y)\mid  (x,y)\in \bA^2(\bR),\ x^2+y^2 = 1\}.
\]
As shown in \cite[Theorem A.1]{MR1958527}, $W(\bC)$ has a
$G$-equivariant deformation retraction down to this copy of
$S^{2,0}$, so $W$ is an algebraic model for $S^{2,0}$. 

By \cite{MR1958527}, the algebraic $K$-theories of $W$, $X$,
and $Y$ \emph{with finite coefficients} in positive degrees
thus coincide with $KR^{-\bullet}(S^{2,0})\cong KO^{-\bullet}\oplus KO^{-\bullet-1}$,
$KR^{-\bullet}(S^{0,2})\cong KSC^{-\bullet}$ and with
$KR^{-\bullet}(S^{1,1})\cong KO^{-\bullet}\oplus KO^{-\bullet+1}$,
respectively.

We also want an algebraic model for $S^{1,1}_{+,-}$.  This requires a
noncommutative twisting of $Y$, given by an Azumaya algebra.
Now it was shown in \cite{MR0412193} that $\Br Y\cong (\bZ/2)^2$,
since $Y(\bR)$ has two connected components
(in the Hausdorff topology). In other words, there is
an Azumaya algebra over $Y$ whose splitting over the two connected
components of $Y(\bR)$ is whatever one would like.  To get the sign
choice $(+,-)$ over the two O-planes, we require an Azumaya algebra
split over one component and non-split over the other.
In fact, we can write this down very explicitly; let $\cA=\bR\langle t, t^{-1},
  u\rangle/(u^2 = -1, ut = -tu)$.  This is a noncommutative
Noetherian $\bR$-algebra with center $Z(\cA)=\bR[t^2, t^{-2}]\cong
\bR[v, v^{-1}]$, in fact
a quaternion algebra over $Z(\cA)$ in the sense of \cite{MR0429992},
\cite{MR0447339}, or \cite{MR570179}.  Over a point in $Y(\bR)$,
corresponding to setting $t^2\mapsto \alpha\in \bR^\times$, splitting
of $\cA$ is determined by the Hilbert symbol $(\alpha,\, -1)
= \begin{cases} +1, & \alpha>0\\ -1, & \alpha<0 \end{cases}$,
and thus the algebra $\cA$ does indeed correspond to the sign choice
$(+,-)$. It follows that with finite coefficients, the algebraic
$K$-theory of $\cA$ in positive degrees agrees with
$KR^{-\bullet}_{(+,-)} (S^{1,1})\cong KSC^{1-\bullet}$.

\subsection{The first duality}
\label{sec:circleduality1}
The first duality for orientifold theories on the circle relates the
type I and IA theories, and is reflected in the $KR$-theory
isomorphism $KR^{1-\bullet}(S^{2,0})=KO^{1-\bullet}(S^1)
\cong KR^{-\bullet}(S^{1,1})$.  We showed in \cite{MR3305978} that
this comes from the Baum-Connes isomorphism $KO\top_\bullet(B\bZ)
\xrightarrow{\cong} KO\top_\bullet(C^*_\bR(\bZ))\cong KR^{-\bullet}(S^{1,1})$.
Via the Karoubi-Weibel connection between $KR$ and algebraic
$K$-theory of real varieties, this suggests comparing
$K_\bullet(Y)$ and $K_{\bullet-1}(W)$. By the Fundamental
Theorem of Algebraic $K$-Theory
(recall that all of $W$, $X$, and $Y$ are smooth), we have
\[
K_\bullet(Y)=K_\bullet\left(\bR[v, v^{-1}]\right)\cong
K_\bullet(\bR) \oplus K_{\bullet-1}(\bR).
\]
At the same time, we have
\[
K_\bullet(W)=K_\bullet\left(\bR[x,y]/(x^2+y^2-1)\right)=
K_\bullet(\bR(q)),
\]
where $q(x,y)=x^2+y^2$, in the notation of \cite{MR799254}, which
computes the $K$-theory of quadric hypersurfaces. (In Swan's notation,
$\bR(q)$ is the quotient of $\bR[x,y]$ by the ideal generated by $q(x,y)-1$.)

We certainly cannot have $K_n(Y)\cong K_{n-1}(W)$ when $n=0$,
since $K_0(Y)=\bZ$ and $K_{-1}(W)=0$.  It's also known that
$K_0(W)\cong \bZ\oplus \bZ/2$ (there are many proofs, 
such as \cite[Corollary 10.8 or \S10, Example 3]{MR799254}, but for a
completely elementary argument see \cite[Exercise 1.4.23]{MR1282290}),
while we see that $K_1(Y) \cong K_1(\bR) \oplus K_0(\bR)
\cong \bR^\times \oplus \bZ$, which surjects onto $K_0(W)\cong
\bZ\oplus \bZ/2$ with completely divisible kernel.

For a complete calculation of $K_\bullet(W)$ we can use
\cite{MR799254}. Note that $W$ is an open affine subscheme of the
(Severi-Brauer)
projective variety over $\bR$ defined by the homogeneous equation
$x^2+y^2 - z^2 = 0$ in $\bP^2$.  By \cite[Example V.1.6.1]{MR3076731}, this
variety is simply $\bP^1(\bR)$.  The complement $\bP^1\smallsetminus
W$ has no real points and two conjugate complex points; as a scheme it
is just $\Spec \bC$.  By Quillen's calculation of $K$-theory for
projective spaces (\cite[\S8]{MR0338129} or \cite[Theorem
  V.1.5]{MR3076731}), $K_\bullet(\bP^1(\bR)) 
\cong K_\bullet(\bR) \oplus K_\bullet(\bR) $.  Thus we
derive the exact sequence (which also appears as \cite[Theorem
  2]{MR799254}): 
\begin{equation}
\cdots \to K_n(\bC) \xrightarrow{\alpha} K_n(\bR) \oplus K_n(\bR) \to
K_n(W) \xrightarrow{\partial} K_{n-1}(\bC) \to \cdots. 
\label{eq:KthyW}
\end{equation}
Since $W(\bR)\ne \emptyset$, one copy of $K_n(\bR)$ splits off from
$K_n(W)$ and the complement $\tK_n(W)$ is given by the homotopy groups
of the cofiber $\widetilde{\bK}(W)$
of a map $r\co\bK(\bC)\to \bK(\bR)$.  From the
description in \cite[Theorem 2]{MR799254} of the map $\alpha$ in
\eqref{eq:KthyW} as a forgetful map, or from Quillen's description
of the $K$-theory of projective space, one can see that $r$ is just
the realification or transfer map obtained from viewing
finite-dimensional complex vector spaces as real vector spaces of
twice the dimension.

Note by the way that the calculation of $K_\bullet(Y)$ could also be
done by the same method as for $K_\bullet(W)$, as $Y$ is an open
subscheme of the projective nonsingular curve defined by the
homogeneous equation $xy-z^2=0$, which again is $\bP^1(\bR)$.  The
complement $\bP^1\smallsetminus Y$ is $\Spec \bR\sqcup \Spec \bR$, so
the analogue of \eqref{eq:KthyW} is
\[
\cdots \to 
K_n(\bR)\oplus K_n(\bR) \xrightarrow{\alpha} K_n(\bR)\oplus K_n(\bR)
\to K_n(Y) \xrightarrow{\partial} K_{n-1}(\bR)\oplus K_{n-1}(\bR) \cdots
\]
which after removing $ K_n(\bR) \xrightarrow{\cong} K_n(\bR) $ from
$\alpha$ splits into split short exact sequences
\[
0\to K_n(\bR)\to K_n(Y) \rightleftarrows K_{n-1}(\bR) \to 0.
\]

The algebraic $K$-theory version of the T-duality between the type I
and type IA theories on the circle now takes the following form.
\begin{theorem}
\label{thm:IandIAduality}
Let $W = \Spec \left(\bR[x,y]/(x^2+y^2-1)\right)$ be the algebraic
circle and let $Y =\bG_m$ {\lp}over $\bR${\rp}.  The $K$-theory spectrum of
$Y$ splits as $\bK(Y)\cong \bK(\bR)\oplus \Sigma\bK(\bR)$.  The
$K$-theory spectrum of $W$ splits as $\bK(W)\cong \bK(\bR)\oplus
\widetilde{\bK}(W)$, where $\widetilde{\bK}(W)$ is the cofiber of the
realification map $r\co \bK(\bC)\to \bK(\bR)$.  There is a map
of $K$-theory spectra with finite coefficients
$\Omega\bK(\bR;\bZ/m)\to \widetilde{\bK}(W;\bZ/m)$ for which the induced map
on  homotopy groups
\[
K_{n+1}(\bR;\bZ/m)\to \widetilde{K}_{n}(W;\bZ/m)
\]        
is an isomorphism for $n\ge0$ and any $m>0$.

In lowest dimensions we have $K_0(Y)\cong \bZ$, $K_1(Y)\cong
\bR^\times \oplus \bZ$, $K_2(Y)\cong K_2(\bR)\oplus \bR^\times \cong
\bR^\times \oplus \bZ/2 \oplus (\text{uniquely divisible})$,
$K_0(W)\cong \bZ\oplus \bZ/2$,
$K_1(W)\cong \bR^\times \oplus \bZ/2$.
\end{theorem}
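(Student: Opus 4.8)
The plan is to establish the three claims in order; the first two are essentially a repackaging of the discussion preceding the statement, and the third is where the work lies.

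\emph{The two splittings and the low-degree groups.} The decomposition $\bK(Y)\simeq\bK(\bR)\oplus\Sigma\bK(\bR)$ is the Fundamental Theorem (Bass--Heller--Swan) for the regular ring $\bR[v,v^{-1}]$ at the level of spectra; alternatively it comes from the localization sequence for $Y\hookrightarrow\bP^1_{\bR}$ with complement $\Spec\bR\sqcup\Spec\bR$ together with Quillen's projective bundle formula $\bK(\bP^1_{\bR})\simeq\bK(\bR)\vee\bK(\bR)$, as in the text. For $W$, I would use the localization sequence for the open immersion of $W$ into the conic $\bP^1_{\bR}$ (which is $\bP^1_{\bR}$ since $W(\bR)\ne\emptyset$) with reduced closed complement $\Spec\bC$; combined with $\bK(\bP^1_{\bR})\simeq\bK(\bR)\vee\bK(\bR)$ this presents $\bK(W)$ as the cofiber of a map $\bK(\bC)\to\bK(\bR)$, and splitting off one $\bK(\bR)$ via a rational point of $W$ leaves $\widetilde{\bK}(W)$. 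That the residual map is realification of scalars along $\bC/\bR$ is read off from Swan's description of the quadric $K$-theory, or from Quillen's description of $\bK(\bP^1)$, exactly as carried out before the theorem. The low-degree clause then drops out of the resulting long exact sequences: using $K_0(\bR)=\bZ$, $K_1(\bR)=\bR^\times$, $K_2(\bR)\cong\bZ/2\oplus(\text{uniquely divisible})$, and the facts that on $W$ realification is multiplication by $2$ on $K_0$ and is $z\mapsto z\bar z\in\bR_{>0}$ on $K_1$ (cokernel $\bZ/2$ in each degree, since $K_{-1}(\bC)=K_{-1}(\bR)=0$), the sequence \eqref{eq:KthyW} gives $K_0(W)\cong\bZ\oplus\bZ/2$ and $K_1(W)\cong\bR^\times\oplus\bZ/2$, while $K_\bullet(Y)\cong K_\bullet(\bR)\oplus K_{\bullet-1}(\bR)$ yields the stated $K_i(Y)$.

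\emph{The comparison map.} For the third assertion I would leave algebraic geometry and work topologically. By the Karoubi--Weibel theorem applied to $\Spec\bR$, $\Spec\bC$ and to $W$, the natural comparison maps with $\bZ/m$ coefficients identify $\bK(\bR;\bZ/m)$ and $\bK(\bC;\bZ/m)$, in nonnegative degrees, with $KO/m$ and $KU/m$, carry the complexification map to its topological counterpart, and identify $\bK(W;\bZ/m)$ with $KR_h(W(\bC);\bZ/m)$ in degrees $\ge\dim W=1$ (and with the Bloch--Kato/Beilinson--Lichtenbaum input noted in the text one may take any $m>0$, not just a power of $2$). Since $W(\bC)$ retracts $G$-equivariantly onto $S^{2,0}=S^1$ with trivial involution, $KR_h(W(\bC);\bZ/m)\simeq KR(S^{2,0};\bZ/m)=F(S^1_+,KO/m)$, and the rational-point splitting of $\bK(W)$ goes over to the splitting $F(S^1_+,KO/m)\simeq KO/m\oplus\Omega(KO/m)$ coming from $S^1=\Sigma S^0$; hence $\widetilde{\bK}(W;\bZ/m)$ is identified, in positive degrees, with $\Omega(KO/m)$ and thus with $\Omega\bK(\bR;\bZ/m)$. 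Equivalently, this is the $\bZ/m$-coefficient version of the standard cofiber sequence $KU\xrightarrow{r}KO\xrightarrow{\eta}\Sigma^{-1}KO$ (with $\eta$ the Hopf element, a form of Wood's theorem) in the topological range, and it is the algebraic counterpart of the topological T-duality $KR^{1-\bullet}(S^{2,0})\cong KR^{-\bullet}(S^{1,1})$ of \S\ref{sec:circleduality1}. Composing these natural maps of spectra with the suspension equivalence, and inverting the Karoubi--Weibel map in the range of degrees where it is an equivalence, produces a map $\Omega\bK(\bR;\bZ/m)\to\widetilde{\bK}(W;\bZ/m)$ that is an isomorphism on $\pi_n$ for $n\ge1$ by construction; the case $n=0$ is checked directly, both $K_1(\bR;\bZ/m)$ and $\widetilde{K}_0(W;\bZ/m)$ being $\bZ/\gcd(2,m)$ (coming from $\bR^\times/m$ and $(\bZ/2)/m$ respectively, using $\widetilde{K}_{-1}(W)=0$).

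\emph{Main obstacle.} I expect the difficulty to be bookkeeping rather than anything conceptual: the affine variety $W$ forces the use of homotopy-invariant $KR_h$ in place of compactly supported $KR$; the Karoubi--Weibel identification is only an equivalence above the dimension, so some care (truncation) is needed to promote a range-wise identification to an honest map of spectra; and one must check that the algebraic splitting $\bK(W)\simeq\bK(\bR)\oplus\widetilde{\bK}(W)$ is carried, compatibly with all the comparison maps, onto the topological splitting of $F(S^1_+,KO/m)$. Treating every $m>0$ uniformly, rather than only powers of $2$, is a minor additional point, requiring the odd-primary Beilinson--Lichtenbaum/Bloch--Kato input alongside Karoubi--Weibel.
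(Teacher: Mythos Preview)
Your proposal is correct, and for the first two parts (the splittings and the low-degree computations) it matches the paper essentially word for word.

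For the comparison map, however, your primary route is more circuitous than the paper's. You apply Karoubi--Weibel to $W$ itself, pass to $KR_h(W(\bC);\bZ/m)$, use the equivariant retraction onto $S^{2,0}$, and then match the rational-point splitting of $\bK(W)$ with the suspension splitting of $F(S^1_+,KO/m)$; this is exactly what generates the bookkeeping worries you list under ``Main obstacle'' (the affine $KR_h$ issue, the dimension threshold, compatibility of splittings). The paper sidesteps all of this: since $\widetilde{\bK}(W)$ has \emph{already} been identified as the cofiber of $r\co\bK(\bC)\to\bK(\bR)$, one only needs Suslin's theorem for the two \emph{fields} $\bR$ and $\bC$ (no Karoubi--Weibel for $W$ at all) to replace this by the cofiber of $r\co\bK\top(\bC)\to\bK\top(\bR)$ with finite coefficients, and then the topological complexification/realification cofiber sequence
\[
\bK\top(\bR)\xrightarrow{c}\bK\top(\bC)\xrightarrow{r\circ\beta^{-1}}\Sigma^2\bK\top(\bR)
\]
identifies that cofiber with $\Omega\bK\top(\bR)$. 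Your parenthetical ``equivalently, this is the $\bZ/m$-coefficient version of the standard cofiber sequence $KU\xrightarrow{r}KO\xrightarrow{\eta}\Sigma^{-1}KO$'' is precisely the paper's argument; had you led with that, the obstacles you flag would not arise.
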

\begin{proof}
Most of this is contained in the above discussion, but it remains to
relate the spaces $\Omega\bK(\bR)$ and $\widetilde{\bK}(W)$.
Recall that by \cite{MR772065}, the natural maps
$\bK(\bR)\to \bK\top(\bR)$ and $\bK(\bC)\to \bK\top(\bC)$ are
equivalences in nonnegative degrees with finite coefficients.  Now by
real Bott periodicity, there is a cofiber sequence of spectra
\begin{equation}
\bK\top(\bR)\xrightarrow{c} \bK\top(\bC)\xrightarrow{r\circ
  \beta^{-1}} \Sigma^2\bK\top(\bR),
\label{eq:rcseq}
\end{equation}
where $c$ is ``complexification'' and $\beta\co
\bK\top(\bC)\xrightarrow{\cong} \Omega^2\bK\top(\bC)$ is the Bott map,
associated to a familiar complexification/realification sequence in
topological $K$-theory (see for example \cite[Theorem
  1.18(5)]{MR1935138} or \cite[Theorem III.5.18]{MR0488029}). That
means that the 
cofiber of $r\co \bK\top(\bC)\to \bK\top(\bR)$ can be identified with
$\Omega\bK\top(\bR)$. Because of Suslin's Theorem, the result follows.
\end{proof}

\begin{remark}
\label{rem:nodereqaffine}
One might wonder if there is some equivalence of derived categories
underlying the $K$-theory isomorphism in Theorem
\ref{thm:IandIAduality}, but because of \cite[Proposition
  9.2]{MR1002456}, there are no nontrivial derived equivalences
between affine schemes.  The projective case is different, as we will
see later.
\end{remark}

\subsection{The second duality}
\label{sec:ItildeandIAtilde}
Now we want to look for an algebraic analogue of the duality between
the $\tI$ and $\tIA$ theories on $S^1$.  This means we want to relate
the algebraic $K$-theories of $X$ and the Azumaya algebra $\cA$ over
$Y$.

To begin, we can compute the $K$-theory of $X=\bR(q)$ 
with $q(x,y)=-x^2-y^2$ the same way we did for
$W$.  The major difference is that $X$ has no real points, nor does
its projective completion, the quadric in $\bP^2$ defined by
$x^2+y^2+z^2=0$. Thus this quadric is a Brauer-Severi variety
associated to a non-split Azumaya algebra, namely $\bH$.  So by
Quillen's calculation in \cite[\S8]{MR0338129}, the $K$-theory
spectrum of this variety is $\bK(\bR)\oplus \bK(\bH)$, and as before
in \eqref{eq:KthyW} we get an exact sequence (which again could be
read off from \cite[Theorem 2]{MR799254})
\begin{equation}
\cdots \to K_n(\bC) \xrightarrow{\alpha} K_n(\bR) \oplus K_n(\bH) \to
K_n(X) \xrightarrow{\partial} K_{n-1}(\bC) \to \cdots. 
\label{eq:KthyX}
\end{equation}
Following \cite{MR799254} again, the map $\alpha$ comes from
restriction of modules over Clifford algebras, from $M_2(\bC)$ to the
subalgebras $\bH$ and $M_2(\bR)$.  The first component of $\alpha$ is
$r\co \bK(\bC)\to \bK(\bR)$; the second component is the map
$\bK(\bC)\to \bK(\bH)$ induced by restriction of modules over $M_2(\bC)$
to $\bH$, whose analogue in topological $K$-theory fits into a shifted
version of \eqref{eq:rcseq}.  The lowest-degree portion of
\eqref{eq:KthyX} looks like
\[
\cdots \to \bC^\times \xrightarrow{\alpha} \bR^\times \times
\left(\bH^\times/SU(2)\right) \to K_1(X) \to \bZ \xrightarrow{a\mapsto (2a, a)}
\bZ^2 \to K_0(X) \to 0.
\]
Thus $K_0(X)\cong \bZ$ and $K_1(X) \cong \bR^\times$.  Note that
modulo completely divisible abelian groups these agree with
$KR^{-j}(X(\bC))\cong KSC^{-j}$ for $j=0,1$, as we might expect from
\cite{MR1958527}. In fact, with finite coefficients and in positive
degrees, we can replace algebraic $K$-theory by topological $K$-theory
in \eqref{eq:KthyX} using \cite{MR772065}, and then \eqref{eq:KthyX}
becomes the known exact sequence \cite[III.7.15]{MR0488029}
\[
\cdots \to KSC^{-n-1} \to K^{-n} \to KO^{-n}\oplus KSp^{-n} \to
KSC^{-n} \to \cdots.
\]

We turn now to the $K$-theory of the noncommutative algebra $\cA$ from
Section \ref{sec:circle}. In the notation of \cite{MR799254}, this
algebra is a Clifford algebra $C(q)$ over $R=\bR[t^2, t^{-2}]$
attached to a rank-two quadratic $R$-module $(M,q)$, where if $e_1,
e_2$ are an $R$-basis for the free $R$-module $M$, $q(e_1) = -1$,
$q(e_2)=t^2$, and $e_1\perp e_2$.  Since $t^2$ is a unit in $R$, $q$
is non-singular. Furthermore, $C(q)\cong C_0(q\oplus\langle
-1\rangle)$ by \cite[Lemma 4.5]{MR799254}.  Thus we can apply
\cite[Theorem 9.1]{MR799254} with $d=1$ to get
$\bK(\cA) \oplus \bK(R)\cong \bK(Z)$. Here 
$Z = X(q\oplus\langle-1\rangle)$ is the closed subscheme of $\bP^2(R)$
defined by the homogeneous equation $q\oplus\langle-1\rangle=0$.
Alternatively, by \cite[\S8, Theorem 4.1]{MR0338129},
$\bK(\cA) \oplus \bK(R)\cong \bK(Z)$, where $Z$ the Brauer-Severi scheme
over $Y$ associated to $\cA$, and this turns out to be the same as
above. So we're basically reduced to computing the $K$-theory of $Z$.

This is still a formidable task, so it is easiest to take another
approach and to regard $\cA$ as a crossed product or
twisted Laurent polynomial extension $\bC \rtimes_\sigma \bZ =
\bC_\sigma [t, t^{-1}]$, where $\bC = \bR[u]/(u^2+1)$ and $\sigma$ is
complex conjugation (since conjugation by $t$ sends $u$ to $-u$).
Thus by the ``algebraic Pimsner-Voiculescu sequence'' of
\cite[Theorems 2.1 and 2.3]{MR929766} or \cite[Corollary
  2.2]{MR1325783} (Nil terms drop out since everything is regular),
$\bK(\cA)$ is equivalent to the cofiber of $\bK(\bC) 
\xrightarrow{1-\sigma_*} \bK(\bC)$.  In particular, in low degrees
we get
\[
\cdots \to \star \to \star \to K_2(\cA) \to 
\bC^\times \xrightarrow{z\mapsto z/\overline z} \bC^\times \to K_1(\cA)
\to \bZ \xrightarrow{0} \bZ \to K_0(\cA) \to 0,
\]
where $\star=K_2(\bC)$ is uniquely divisible,
so $K_2(\cA)\cong \bZ/2\oplus (\text{uniquely divisible})$, 
$K_1(\cA)\cong \bR^\times_+\oplus \bZ$ and $K_0(\cA)\cong \bZ$.

The algebraic $K$-theory version of the T-duality between the 
$\tI$ and $\tIA$ theories on the circle now takes the following form.
\begin{theorem}
\label{thm:ItandIAtduality}
Let $X = \Spec \left(\bR[x,y]/(x^2+y^2+1)\right)$, a smooth affine
quadric curve with no real points, and let $\cA$ be the quaternion
algebra over $Y =\bG_m$ given by $\cA=\bR\langle t, t^{-1},
u\rangle/(u^2 = -1, ut = -tu)$. The $K$-theory spectrum of $\cA$ is
the cofiber of $\bK(\bC)\xrightarrow{1-\sigma_*} \bK(\bC)$, where
$\sigma $ is complex conjugation.  The $K$-theory spectrum of $X$
is the cofiber of $\bK(\bC)\xrightarrow{\alpha} \bK(\bR)\oplus
\bK(\bH)$, where $\alpha$ comes from restriction of Clifford modules,
as explained above.  There is a map
of $K$-theory spectra with finite coefficients
$\Omega\bK(\cA;\bZ/m)\to {\bK}(X;\bZ/m)$ for which the induced map on
homotopy groups
\[
K_{n+1}(\cA;\bZ/m)\to {K}_{n}(X;\bZ/m)
\]        
is an isomorphism for $n\ge0$ and any $m>0$.

In lowest dimensions we have $K_0(X)\cong \bZ$, $K_1(X)\cong
\bR^\times$, $K_0(\cA)\cong \bZ$, $K_1(\cA)\cong \bR^\times_+ \oplus \bZ$.
\end{theorem}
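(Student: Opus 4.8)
As in the proof of Theorem~\ref{thm:IandIAduality}, most of the statement is already contained in the discussion above: the description of $\bK(\cA)$ as the cofiber of $1-\sigma_*\co\bK(\bC)\to\bK(\bC)$ is the algebraic Pimsner--Voiculescu sequence of \cite{MR929766,MR1325783} (with no Nil terms, since everything involved is regular), the description of $\bK(X)$ as the cofiber of $\alpha\co\bK(\bC)\to\bK(\bR)\oplus\bK(\bH)$ is Swan's and Quillen's computation \cite{MR799254,MR0338129}, and the groups $K_0$, $K_1$ of $X$ and $\cA$ are read off from \eqref{eq:KthyX} and its $\cA$-analogue. So the only new point is to produce a map $\Omega\bK(\cA;\bZ/m)\to\bK(X;\bZ/m)$ and to check that it induces an isomorphism on $\pi_n$ for $n\ge0$.

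I would argue just as for Theorem~\ref{thm:IandIAduality}. By Suslin's theorem \cite{MR772065}, the comparison maps $\bK(\bR)\to\bK\top(\bR)$ and $\bK(\bC)\to\bK\top(\bC)$ are equivalences in nonnegative degrees with $\bZ/m$-coefficients, and $\sigma_*$ becomes complex conjugation on $\bK\top(\bC)$; the same holds for $\bH$ (for odd $m$ immediately via the degree-$4$ transfers of $\bH/\bR$, in general by Suslin rigidity for the Azumaya algebra $\bH$, or by descent along $\bH\otimes_\bR\bC\cong M_2(\bC)$). Hence, as was observed above, with finite coefficients in nonnegative degrees \eqref{eq:KthyX} becomes the classical exact sequence \cite[III.7.15]{MR0488029}, which is the long exact sequence of the cofiber sequence $\bK\top(\bC)\xrightarrow{\alpha}\bK\top(\bR)\oplus\bK\top(\bH)\to KSC$; so $\bK(X;\bZ/m)$ agrees, in nonnegative degrees, with the self-conjugate $K$-theory spectrum $KSC$ with $\bZ/m$-coefficients. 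Dually, the Pimsner--Voiculescu presentation together with Suslin's theorem identifies $\bK(\cA;\bZ/m)$, in positive degrees, with the cofiber of $1-\sigma_*$ acting on $\bK\top(\bC)$, which is $\Sigma KSC$ (with $\bZ/m$-coefficients); this is precisely the identification of the algebraic $K$-theory of $\cA$ with $KR^{-\bullet}_{(+,-)}(S^{1,1})\cong KSC^{1-\bullet}$ recorded in Section~\ref{sec:circle}, matching the identification of $\bK(X;\bZ/m)$ with $KR^{-\bullet}(S^{0,2})\cong KSC^{-\bullet}$ ($X$ being an algebraic model for $S^{0,2}$ and $\cA$ one for $S^{1,1}_{(+,-)}$). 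Since each of these identifications is realized by a map of spectra coming from the relevant cofiber sequence, composing them produces the required map, and it induces the isomorphism $K_{n+1}(\cA;\bZ/m)\xrightarrow{\ \cong\ }K_n(X;\bZ/m)$ for $n\ge0$, the $n=0$ case being subsumed in the low-degree computations above; the restriction to $n\ge0$ is necessary since, for instance, $K_{-1}(X;\bZ/m)=0$ whereas $K_0(\cA;\bZ/m)\cong\bZ/m$. (Alternatively, one can follow \cite{MR3305978} and realize the map through the real Baum--Connes/Pimsner--Voiculescu machinery, as was done there for the $KR$-theoretic statement underlying this one.)

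The step that I expect to require the most care is making precise the assertion that the two copies of $KSC$ occurring above are the \emph{same} spectrum (up to the stated shift), rather than merely having the same homotopy groups: one must match the map $\alpha$ of \eqref{eq:KthyX}---described by Swan through restriction of Clifford modules from $M_2(\bC)$ down to $\bH$ and to $M_2(\bR)$---with the standard map $\bK\top(\bC)\to\bK\top(\bR)\oplus\bK\top(\bH)$ of \cite[III.7.15]{MR0488029}, and then use that both realification and the $\bH$-restriction are invariant under complex conjugation on $\bK\top(\bC)$ (the two embeddings $\bH\hookrightarrow M_2(\bC)$ involved differ by an inner automorphism of $\bH$, namely conjugation by $j$), so that $\alpha$ annihilates $1-\sigma_*$ and hence factors through $\mathrm{cofib}(1-\sigma_*)=\Sigma KSC$; an octahedral argument---or a direct comparison of low-degree homotopy groups, since the periodic pattern $\bZ,\ \bZ/2,\ 0,\ \bZ,\dots$ of $KSC^{-\bullet}$ leaves little freedom---then identifies this factorization with the Pimsner--Voiculescu presentation. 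A secondary, purely technical point, already flagged, is the precise form of Suslin's theorem needed for the quaternion algebra $\bH$.
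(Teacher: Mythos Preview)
Your argument is correct and follows essentially the same route as the paper: apply Suslin's theorem to pass to topological $K$-theory with finite coefficients, identify $\bK(\cA;\bZ/m)$ with $\Sigma KSC$ via the Pimsner--Voiculescu cofiber sequence and $\bK(X;\bZ/m)$ with $KSC$ via the Swan/Quillen cofiber sequence and \cite[III.7.15]{MR0488029}, and conclude. Your discussion of the matching of the two $KSC$ presentations and of Suslin-type input for $\bH$ is more explicit than the paper's own proof, which simply asserts these identifications and says ``the result follows.''
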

\begin{proof}
Again, most of this is in the discussion above. After applying
\cite{MR772065} and going to the stable range (so we can ignore the
difference between connective and periodic $K$-theory spectra), we see
that with finite coefficients, $\bK(\cA)$ becomes the cofiber of
$\bK\top(\bC)\xrightarrow{1-\sigma_*} \bK\top(\bC)$, which is
$\Sigma\bK\bS\bC$, $\bK\bS\bC$ the $4$-periodic spectrum of
self-conjugate $K$-theory.  Similarly $\bK(X)$ becomes
the cofiber of $\bK\top(\bC)\xrightarrow{r} \bK\top(\bR)\oplus
\bK\top(\bH)$, which is $\bK\bS\bC$, and the result follows.
\end{proof}

\section{Some elliptic curve orientifolds}
\label{sec:ellcurve}

Now we consider cases of duality related to elliptic curves.
Generally speaking, a duality between a type IIA theory and a type IIB
theory on elliptic curves should in the world of algebraic mirror
symmetry be reflected in an equivalence between a category of coherent
sheaves on one side and a Fukaya category on the other 
\cite{MR1633036}. Since the
analogue of Fukaya categories in the orientifold case is not well
developed yet, we leave such dualities as a project for future work.
But there is one interesting case of an orientifold duality that we
should be able to treat without going out of categories of coherent
sheaves. This involves the T-duality grouping
\cite[\S4.1.2]{MR3316647} containing the type IIA theory on an
elliptic curve with free antiholomorphic involution and the
type IIA theory on an elliptic curve with antiholomorphic involution 
with exactly two fixed circles, one given a $+$ charge and one given a
$-$ charge (the $\tIA$ theory). Since an elliptic curve with
antiholomorphic involution 
is the same thing as a smooth projective genus-$1$ curve defined over
$\bR$, there are algebraic models for these theories involving
coherent sheaves on real varieties, though in one case we need to
twist by a suitable Azumaya algebra.  That such an Azumaya algebra
exists is guaranteed by an application of \cite[Theorem
  3.6]{MR1869390}, as  indicated in the following Proposition.
Note that in what follows, $\cA$ represents a different Azumaya algebra
than in Section \ref{sec:circles} above.  We recall that a smooth
projective curve defined over $\bR$ has an invariant called the
\emph{species}, which is the number of connected components of the
set of real points (in the analytic topology). This invariant can take
any value from $0$ up to $g+1$ ($g$ the genus), by a classical
theorem of Harnack (see also \cite[Proposition 3.1(1)]{MR631748}).
\begin{proposition}
\label{prop:Azellcurve}
Let $E$ be a split elliptic curve defined over $\bR$ {\lp}i.e., in the
language of \cite{MR640091}, a real elliptic curve of species $2$, so
that $E(\bR)$ is topologically a disjoint union of two
circles{\rp}. Then there is a unique {\lp}up to Morita equivalence{\rp}
Azumaya algebra $\cA$ over $E$
representing an element of $\Br(E)$ of order $2$, such that $\cA$ is
split over the component $E(\bR)^+$ of $E(\bR)$ containing the identity element
$e$ {\lp}for the group structure on $E(\bC)${\rp} and non-split over
the other component $E(\bR)^-$ of $E(\bR)$.
\end{proposition}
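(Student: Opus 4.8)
The plan is to pass to Brauer classes. Azumaya algebras over $E$, up to Morita equivalence, are classified by $\Br(E)$, with tensor product corresponding to addition; so the assertion is that $\Br(E)$ contains a unique element of order $2$ that restricts to the split class on $E(\bR)^+$ and to the nonsplit class on $E(\bR)^-$. First I would make precise the ``sign'' of a class along the real locus: for a real point $p$, restriction is a homomorphism $\Br(E)\to\Br\lp\kappa(p)\rp=\Br(\bR)=\bZ/2$, and for a fixed class the locus in $E(\bR)$ on which it splits is open and closed in the analytic topology, since the real form $M_n(\bR)$ or $M_{n/2}(\bH)$ of a fibre cannot jump within a connected family (it is detected, say, by the signature of the reduced trace form). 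As $E$ is split, $E(\bR)=E(\bR)^+\sqcup E(\bR)^-$, so these restrictions assemble into a homomorphism $\rho\co\Br(E)\to(\bZ/2)^2$, and we are looking for the preimage of $(0,1)$, ordering the factors by $\lp E(\bR)^+,E(\bR)^-\rp$.

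For existence I would invoke \cite[Theorem 3.6]{MR1869390}, as signposted before the Proposition: it produces an Azumaya algebra on $E$ realizing the prescribed pattern of local invariants at the real components, the hypotheses of that theorem being met in this case, and one may take it to be a quaternion algebra $\cA$ with $\rho\lp[\cA]\rp=(0,1)$. Being a quaternion algebra, $[\cA]$ is killed by $2$; and it is nonzero because it is nonsplit over $E(\bR)^-$, so $[\cA]$ has order exactly $2$.

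For uniqueness it suffices to prove $\rho$ injective, and for that I would bound $\Br(E)$ using the Hochschild--Serre spectral sequence for the double cover $E_\bC\to E$ with group $G=\Gal(\bC/\bR)$. Since $\Br(E_\bC)=0$ by Tsen's theorem, $\Br(E)$, which is torsion and equals the cohomological Brauer group $H^2(E,\bG_m)$, has a two-step filtration whose graded pieces are subquotients of $H^2(G,\bC^\times)=\bR^\times/\bR_{>0}\cong\bZ/2$ and of $H^1(G,\Pic(E_\bC))$; choosing the $G$-fixed base point $e$ to split $\Pic(E_\bC)\cong\bZ\oplus E_\bC(\bC)$ (with $G$ acting trivially on the degree) and running a short computation with the period lattice, one gets $H^1(G,\Pic(E_\bC))=H^1(G,E_\bC(\bC))\cong\bZ/2$, precisely because $E$ has species $2$. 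Hence $\lvert\Br(E)\rvert\le4$. On the other hand $\rho$ already attains four distinct values: $0$; $\rho\lp[\cA]\rp=(0,1)$; the value $(1,1)$ on the constant class $[\bH\otimes_\bR\cO_E]$, which is nonsplit at every real point and is nonzero in $\Br(E)$ since it restricts to $[\bH]\ne0$ at $e$; and their sum $(1,0)$. Therefore $\lvert\Br(E)\rvert=4$, $\rho$ is surjective, hence an isomorphism, so $(0,1)$ has a unique preimage; since $\Br(E)\cong(\bZ/2)^2$ is $2$-torsion, that preimage is exactly the order-$2$ class sought, which proves both existence and uniqueness.

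The main obstacle is the Brauer-group computation — identifying $H^1(G,E_\bC(\bC))\cong\bZ/2$ (this is where species $2$ genuinely enters) and making the bound $\lvert\Br(E)\rvert\le4$ airtight by checking that no surviving differential or extension problem can enlarge it; once that is in hand, the order count forces $\rho$ to be an isomorphism and everything else is formal. Alternatively, one may simply quote the known structure $\Br(X)\cong(\bZ/2)^{s}$ for a smooth projective real curve of species $s\ge1$ (in the spirit of the computation $\Br(\bG_m)\cong(\bZ/2)^2$ of \cite{MR0412193} used in Section \ref{sec:circles}) and go straight to the order count.
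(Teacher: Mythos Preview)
Your proof is correct and follows essentially the same route as the paper: both arguments identify $\Br(E)\cong(\bZ/2)^2$ (the paper by citing Pedrini--Weibel and Witt, you by bounding via Hochschild--Serre and filling via the order count, and you note the citation route as an alternative), compute $H^1(G,E(\bC))\cong\bZ/2$ using the species-$2$ hypothesis, and then read off that the restriction-to-components map $\rho$ is an isomorphism onto the group of sign choices. The one substantive difference is that the paper, rather than invoking \cite{MR1869390} abstractly for existence, writes down an explicit representative --- the quaternion algebra with symbol $(-1,\,x-\gamma)$ over $\bR(E)$ for the Weierstra{\ss} form $y^2=(x-\alpha)(x-\beta)(x-\gamma)$ --- and checks the sign pattern directly via the Hilbert symbol; this is more concrete and avoids the dependence on that reference, at the cost of committing to a particular model for $E$.
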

\begin{proof}
A split elliptic curve defined over $\bR$ has Weierstra{\ss} equation
\[y^2 = (x-\alpha)(x-\beta)(x-\gamma),\] 
with $\alpha<\beta<\gamma$ distinct points in $\bR$.
The $2$-torsion subgroup $M={}_2E(\bC)$ of $E(\bC)$ consists of $e$
(the point at $\infty$) and the points $a=(\alpha,0)$, $b=(\beta,0)$, and
$c=(\gamma,0)$, with $c$ 
and $e$ in one component of $E(\bR)$, which we'll call $E(\bR)^+$, and
$a$ and $b$ in the other component $E(\bR)^-$. Let
$G=\Gal(\bC/\bR)$. By \cite[Proposition 3.6]{MR1219265}, $\Br(E)\cong
(\bZ/2)^2$; in fact, as Pedrini and Weibel point out, this was really
computed by Witt in \cite[II$'$ and III$'$]{MR1581415}.  More precisely,
via an analysis of the $G$-action on the exact sequence
\[
0 \to M \to E(\bC) \xrightarrow{2} E(\bC) \to 0,
\]
it is shown in \cite{MR1869390} that ${}_2\!\Br E ={}_2\!\Br \bR \oplus{}
_2H^1(G, E(\bC))$.  As a $G$-module, $E(\bC)$ looks like
$(\bT,\id)\times (\bT, \sigma)$, where $\bT$ as usual is the circle
group, $\id$ is the trivial $G$-action, and $\sigma$ is complex
conjugation. So 
\[
H^1(G, E(\bC)) = \frac{\ker \bigl( (z,w) \mapsto (z,w)(z,w^{-1})
  \bigr)}{\operatorname{image}\bigl( (z,w) \mapsto (z,w)(z^{-1},w)
  \bigr)} \cong \bZ/2,
\]
and $_2\!\Br E$ is exactly the group of ``sign choices'' discussed in
\cite{Doran:2013sxa} and \cite{2012arXiv1202.2057M}.  Thus there is a
unique Brauer class of Azumaya algebras $\cA$ split over $E(\bR)^+$ and
nonsplit over $E(\bR)^-$, which can be chosen to be represented by the
quaternion algebra over the function field $\bR(E)$ given by the
quaternion algebra symbol $(-1, x-\gamma)$.  (Note that $x-\gamma$ is positive
on $E(\bR)^+$, so the Hilbert symbol there is trivial, while it is
negative on $E(\bR)^-$, so the Hilbert symbol there is nontrivial.)
Viewed as a sheaf of $\cO_E$-modules, $\cA$ is locally free of rank
$4$, and its complexification is isomorphic to $M_2\left(\cO_{E_\bC}\right)$.
\end{proof}

Now we are ready for what is really the main theorem of this paper. It
says that in some sense, the ``dual abelian variety'' to a genus-$1$
curve over $\bR$ with no real points is the noncommutative Azumaya
algebra of Proposition \ref{prop:Azellcurve}.
\begin{theorem}
\label{thm:main}
Let $C$ be a smooth projective genus-$1$ curve over $\bR$ with no real
points {\lp}i.e., in the terminology of \textup{\cite{MR640091}}, a
non-classical real elliptic curve of species $0${\rp}.
Let $E$ be the split elliptic curve over $\bR$ with the same
$j$-invariant. Let $\cA$ be the Azumaya algebra over $E$ as identified
in \textup{Proposition \ref{prop:Azellcurve}}.  Let $\cD(C)$ be the
bounded derived category of {\lp}complexes of{\rp} coherent sheaves on
$C$, and let $\cD(E,\alpha)$ be the bounded derived category of
{\lp}complexes of{\rp} $\alpha$-twisted coherent sheaves on $E$,
where $\alpha$ is the Brauer group class of $\cA$. Then
the categories $\cD(C)$ and $\cD(E,\alpha)$ are equivalent
{\lp}in a way we will make explicit{\rp}, and in
particular, $K_\bullet(C)\cong K_\bullet(\cA)$.
\end{theorem}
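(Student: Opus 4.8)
The plan is to descend to $\bR$ the classical Fourier--Mukai equivalence on the complexification, keeping careful track of a Brauer class. I would begin by recording the relevant dictionary: the curve $E$ in the statement is canonically identified with the Jacobian $\Pic^0_{C/\bR}$ of $C$ (a species-$0$ genus-$1$ curve has species-$2$ Jacobian, by the classification of real elliptic curves in \cite{MR640091,MR631748}); since $C$ and $E$ have the same $j$-invariant, $C_\bC\cong E_\bC$; and $C$, having no real point, is the \emph{nontrivial} torsor under $E$, i.e.\ it represents the nonzero element of $H^1\lp G,E(\bC)\rp\cong\bZ/2$ computed in the proof of Proposition~\ref{prop:Azellcurve} (here $G=\Gal(\bC/\bR)$).

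Next I would build a \emph{twisted} Poincar\'e sheaf. Over $\bC$ the moduli space $\Pic^0_{C_\bC}$ is fine, carrying a Poincar\'e line bundle $\cP_\bC$ on $C_\bC\times_\bC E_\bC$, unique once normalized along a point of $C_\bC$; because $C(\bR)=\emptyset$ there is no Galois-stable normalization, so the canonical descent datum on $\cP_\bC$ satisfies the cocycle condition only up to an $\cO^\times$-valued $2$-cocycle pulled back from the $E$-factor. This exhibits $\cP_\bC$, together with its twisted descent datum, as an $\alpha'$-twisted coherent sheaf $\cP$ on $C\times_\bR E$, where $\alpha'\in\Br(E)$ is pulled back along $C\times_\bR E\to E$ --- the ``relative Brauer class'' of $C$ over its Jacobian, exactly as in C{\u{a}}ld{\u{a}}raru \cite{MR2700538,MR1887894}, and visibly the image of the torsor class of $C$ under the injection ${}_2H^1\lp G,E(\bC)\rp\hookrightarrow{}_2\Br E$ appearing in the proof of Proposition~\ref{prop:Azellcurve}. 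The fact that the twist is pulled back purely from $E$, so that the source category remains the \emph{untwisted} $\cD(C)$, is what makes the statement asymmetric.

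With $\cP$ in hand, let $\Phi_\cP\co\cD(C)\to\cD(E,\alpha')$ be the kernel transform $\cF\mapsto Rp_{E*}\lp p_C^*\cF\otimes\cP\rp$. This is an exact $\bR$-linear functor commuting with $-\otimes_\bR\bC$, and after that base change it becomes the classical Fourier--Mukai equivalence $\cD(E_\bC)\xrightarrow{\ \sim\ }\cD(\widehat{E}_\bC)$ with $\widehat{E}_\bC\cong E_\bC$. Full faithfulness of $\Phi_\cP$ is equivalent to an isomorphism $\cP^\vee\star\cP\simeq\cO_\Delta$ of objects of $\cD(C\times_\bR C)$, which may be checked after the faithfully flat (indeed finite \'etale) base change $\Spec\bC\to\Spec\bR$; and a fully faithful exact functor between the bounded derived categories of smooth projective connected curves over $\bR$ whose complexification is an equivalence is itself an equivalence (its image is a nonzero triangulated subcategory stable under the Serre functor, hence everything; alternatively invoke Orlov's criterion). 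So $\cD(C)\simeq\cD(E,\alpha')$. The remaining --- and I expect hardest --- point is to prove $\alpha'=\alpha$: both classes are $2$-torsion, so it suffices to compare their splitting behaviour over the two circles of $E(\bR)$, which I would do by restricting $\cP$ to $C\times_\bR\Spec\kappa(P)$ for a real point $P$ in each component and recognizing the resulting local obstruction as the obstruction for the corresponding degree-$0$ line bundle on $C$ to be defined over $\bR$ --- trivial over $E(\bR)^+$ (the component through $e$) and nontrivial over $E(\bR)^-$ --- matching the quaternion symbol $(-1,\,x-\gamma)$ of Proposition~\ref{prop:Azellcurve}.

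Finally, since $\alpha$ is represented by the Azumaya algebra $\cA$, the category $\cD(E,\alpha)$ is equivalent to the bounded derived category of coherent $\cA$-modules, so the equivalence above reads $\cD(C)\simeq\cD^{b}\lp\operatorname{coh}\cA\rp$; as a derived equivalence induces an isomorphism on algebraic $K$-theory (see e.g.\ \cite[Example V.3.10.2]{MR3076731}), this yields $K_\bullet(C)\cong K_\bullet(\cA)$. In summary, the analytic and formal inputs are routine (the classical Mukai equivalence and faithful flatness of $\bC/\bR$); the genuine work is the construction and Galois-descent bookkeeping for $\cP$ and, above all, the identification of its twisting class with the specific sign-choice class $\alpha$.
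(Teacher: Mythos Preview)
Your proposal is correct and follows essentially the same architecture as the paper: identify $E$ with the Jacobian of $C$, build a twisted Poincar\'e object on $C\times_\bR E$, form the Fourier--Mukai functor, and verify it is an equivalence by the faithfully flat base change $\bR\to\bC$, where it becomes classical Mukai duality (the paper invokes \cite[Lemma~2.12]{MR1921811} for exactly this step).

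The one genuine difference is in how the kernel is realized and how the twist is identified. You take the abstract route: descend $\cP_\bC$ as a rank-one $\alpha'$-twisted sheaf via an obstructed descent datum, then argue \emph{a posteriori} that $\alpha'=\alpha$ by restricting to real points of each component of $E(\bR)$ and reading off the splitting behaviour. The paper instead writes down the untwisted rank-two bundle $\cP=\cP_0\oplus\overline{\cP_0}$ over $\bR$ and then \emph{explicitly} endows it with an $\cO_C\boxtimes\cA$-module structure, producing by hand a split quaternion action over $E(\bR)^+$ and a non-split one over $E(\bR)^-$ (even writing the intertwiner as a Jacobi $\sn$ function to make it visibly meromorphic in the parameter). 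Thus the paper bakes the identification $\alpha'=\alpha$ into the construction rather than checking it afterwards. Your approach is cleaner conceptually and closer to C{\u a}ld{\u a}raru's formalism; the paper's is more explicit and honours the promise in the theorem statement that the equivalence be made ``explicit.'' Either way the substantive content---that the obstruction to descending $\cV_x$ is trivial for $x\in E(\bR)^+$ and quaternionic for $x\in E(\bR)^-$---is the same, and is ultimately the Gross--Harris calculation \cite[Proposition~2.2]{MR631748} that both of you invoke.
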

Since there are several steps to the proof, we separate out the first part
as a lemma.
\begin{lemma}
\label{lem:Jacobian}
Let $C$ and $E$ be as in \textup{Theorem \ref{thm:main}}.
Then the Jacobian variety of $C$ can be identified with $E$,
and $\Pic^0(C)$ {\lp}the group of 
classes of $\Gal(\bC/\bR)$-fixed divisors on $C$ of degree
zero{\rp} can be identified with the connected component of the
identity $E(\bR)^+$ {\lp}which has index $2${\rp} in the group
$E(\bR)$ of real points of $E$.
\end{lemma}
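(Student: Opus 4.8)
The plan is to establish the two assertions in turn.

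\emph{Identifying the Jacobian.} The variety $J:=\operatorname{Jac}(C)=\Pic^{0}_{C/\bR}$ is an elliptic curve over $\bR$, since it carries the neutral point $0$; after base change to $\bC$, Abel--Jacobi based at any complex point of $C$ gives $J_{\bC}\cong\Pic^{0}(C_{\bC})\cong C_{\bC}$, so $j(J)=j(C_{\bC})=j(C)=j(E)$ and $J$ is a real form of $E_{\bC}$. I claim $J$ is the \emph{split} form. Since $C(\bR)=\emptyset$, $C$ is a nontrivial torsor under $J$, so $[C]\neq0$ in $H^{1}(G,J(\bC))$, where $G=\Gal(\bC/\bR)$; but for a real elliptic curve $A$ one has $H^{1}(G,A(\bC))\cong\bZ/2$ if $A$ is split and $0$ otherwise --- the vanishing following from the $G$-module sequence $0\to\Lambda\to\bC\to A(\bC)\to0$ with $\Lambda$ a rhombic period lattice, from $H^{i}(G,\bC)=0$ for $i>0$, and from $H^{2}(G,\Lambda)=\Lambda^{G}/(1+\sigma)\Lambda=0$, whereas for the rectangular lattice of the split case the same computation gives $\bZ/2$, exactly as in the proof of Proposition~\ref{prop:Azellcurve}. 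Hence $J$ is split, so $J\cong E$. (When $j(C)\neq1728$ there are two split curves over $\bR$ with this $j$-invariant, and Theorem~\ref{thm:main} is then to be read as taking $E=\operatorname{Jac}(C)$.)

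\emph{Identifying $\Pic^{0}(C)$.} The low-degree exact sequence of the Hochschild--Serre spectral sequence for $\bG_{m}$ along $C_{\bC}\to C$ reads $0\to\Pic(C)\to\Pic(C_{\bC})^{G}\xrightarrow{\ \delta\ }\Br(\bR)\to\Br(C)$, using $H^{1}(G,\bC^{\times})=0$ and $H^{2}(G,\bC^{\times})=\Br(\bR)=\bZ/2$; restricting to degree zero and using $\Pic^{0}(C_{\bC})^{G}=J(\bR)=E(\bR)$ gives $0\to\Pic^{0}(C)\to E(\bR)\xrightarrow{\ \delta\ }\Br(\bR)$. One inclusion is elementary: for $L\in\Pic^{0}(C_{\bC})$ the line bundle $L\otimes\sigma^{*}L$ carries the tautological descent datum furnished by the swap isomorphism, hence descends to $\bR$, and as $L$ varies $(1+\sigma)[L]$ runs over $(1+\sigma)E(\bC)$, which for $E(\bC)=\bC/\Lambda$ with $\Lambda$ rectangular is the image of $\bR$, namely the identity component $E(\bR)^{+}$. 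Thus $E(\bR)^{+}\subseteq\Pic^{0}(C)\subseteq E(\bR)$, and as $[E(\bR):E(\bR)^{+}]=2$ it remains only to prove $\Pic^{0}(C)\neq E(\bR)$, i.e.\ $\delta\neq0$.

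\emph{The nonvanishing of $\delta$ is the crux}, and it is here that $C(\bR)=\emptyset$ is really used. The plan is first to show that $\Br(\bR)\to\Br(C)$ is the zero map: by Witt's theorem \cite{MR1581415}, $C(\bR)=\emptyset$ forces the function field $\bR(C)$ to be non-formally-real, so the quaternion algebra $\bH\otimes_{\bR}\cO_{C}$ splits over $\bR(C)$ and hence is trivial in $\Br(C)\hookrightarrow\Br(\bR(C))$; since $\Br(\bR)=\{0,[\bH]\}$ this means $\Br(\bR)\to\Br(C)$ is zero, so by exactness $\delta$ is onto $\Br(\bR)=\bZ/2$ on $\Pic(C_{\bC})^{G}$. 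To transfer this to degree zero, observe that $C$ has period $2$ (the order of $[C]$) and index $2$ (there is no $\bR$-rational divisor of degree $1$, since by Riemann--Roch such a divisor would be linearly equivalent over $\bR$ to an effective divisor of degree $1$, i.e.\ to a real point, while $P+\overline{P}$ is an $\bR$-rational divisor of degree $2$). Comparing $0\to\Pic^{0}\to\Pic\xrightarrow{\deg}2\bZ\to0$ over $\bR$ with the same sequence of $G$-invariants over $\bC$, the snake lemma identifies $\operatorname{coker}\bigl(\Pic^{0}(C)\hookrightarrow E(\bR)\bigr)$ with $\operatorname{coker}\bigl(\Pic(C)\hookrightarrow\Pic(C_{\bC})^{G}\bigr)=\bZ/2$. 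Hence $\delta|_{E(\bR)}\neq0$, so $\Pic^{0}(C)$ has index $2$ in $E(\bR)$ and therefore equals $E(\bR)^{+}$, as claimed. (Alternatively, one can identify $\delta$ on $E(\bR)$ with the cup product $x\mapsto x\cup[C]$ for the autoduality pairing $J(\bR)\times H^{1}(G,J(\bC))\to\Br(\bR)$ and read off $\ker\delta=N_{\bC/\bR}E(\bC)=(1+\sigma)E(\bC)=E(\bR)^{+}$ from archimedean Tate local duality; either way, this nonvanishing is the one substantive point, the rest being routine bookkeeping with the two period-lattice shapes.)
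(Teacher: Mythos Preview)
Your argument is correct and takes a genuinely different route from the paper's. The paper proceeds by citing structural results on $\Pic(C)$ from \cite{MR631748,MR1219265}---in particular that line bundles over $C$ have even degree and that $(\Pic^0(C_\bC))^G$ contains $\Pic^0(C)$ with index two---and then gives an explicit complex-analytic and algebraic description: it writes $C_\bC=\bC/(\bZ+i\tau_0\bZ)$ with Galois action $\ts(z)=\bar z+\tfrac12$, exhibits the degree-two isogeny $\pi\co C\to E$ as multiplication by~$2$, and realises the Abel--Jacobi map via $\phi(z)=T_z^*\cL\otimes\cL^{-1}$ for a specific $\ts$-invariant degree-$2$ divisor. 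Your proof is instead cohomological and self-contained: you identify $E$ as $\operatorname{Jac}(C)$ by computing $H^1(G,A(\bC))$ for the two lattice shapes, and you pin down $\Pic^0(C)$ inside $E(\bR)$ via Hochschild--Serre, Witt's theorem on formal reality, and the period/index of $C$.

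Each approach has its virtues. The paper's explicit description feeds directly into the proof of Theorem~\ref{thm:main}, where the divisor of the Poincar\'e bundle and the Jacobi-function calculation require concrete formulas. Your argument is cleaner as a stand-alone proof of the lemma and makes the role of $C(\bR)=\emptyset$ transparent (it forces both the nontriviality of the torsor and the vanishing of $\Br(\bR)\to\Br(C)$). One small point: your snake-lemma step tacitly uses that $\deg\bigl(\Pic(C_\bC)^G\bigr)=2\bZ$, which is exactly the statement that the connecting map $\bZ\to H^1(G,E(\bC))$ sends $1$ to $[C]$; you have all the ingredients (period $=2$), but it would be worth making that identification explicit. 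Your parenthetical about the possible ambiguity in ``the split elliptic curve with the same $j$-invariant'' is a fair observation; reading $E=\operatorname{Jac}(C)$ resolves it.
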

\begin{proof}[Proof of Lemma]
As before we let $G=\Gal(\bC/\bR)$.
We begin by recalling a number of facts from \cite{MR631748} and
\cite{MR1219265} about the Picard group of $C$.  First of all,
$C_\bC = C\times_{\Spec \bR}\Spec \bC$ is a complex elliptic curve and
as such is self-dual. So the Picard group $\Pic(C_\bC)$ of $C_\bC$ can be
identified with $\Pic^0(C_\bC)\times \bZ$, with $\Pic^0(C_\bC)\cong
C_\bC$. As shown in \cite[Proposition 2.2]{MR631748} and
\cite[Proposition 1.1]{MR1219265}, the Picard
group $\Pic(C)$ of $C$ is a subgroup of $\Pic(C_\bC)$, but has two unusual
features.  First of all, the image of $\Pic(C)$ in
$\Pic(C_\bC)/\Pic^0(C_\bC) \cong \bZ$ is of index two, i.e., all
algebraic line bundles over $C$ (defined over $\bR$)
have even degree.  Secondly,
$\left(\Pic^0(C_\bC)\right)^G$ contains the connected component of the
identity $\Pic^0(C)$ in $\Pic(C)$ with index $2$.  The connected 
component of the identity, denoted $\Pic^+(\bR)$ in \cite{MR631748},
consists of classes of $G$-fixed divisors, but there exist divisors
whose class modulo rational equivalence is fixed by $G$ without the divisor
itself being fixed by $G$. The upshot of this discussion is that the
Jacobian variety of $C$ is a real elliptic curve of species $2$. It has
to complexify to $C_\bC$, so it has the same $j$-invariant as $C$, and
can be identified with $E$, with $\Pic^0(C)$ identified with $E(\bR)^+$.
In fact, there is an \'etale morphism
$\pi$ (an isogeny) from $C$ onto $E$, which on the complexification $C_\bC$ is
just multiplication by $2$.  Complex analytically, $C$ can be
identified with $\bC/(\bZ + \tau \bZ)$, where $\tau=i\tau_0$ with
$\tau_0 > 0$, and the action of $G$ on $C$ is given by $\ts\co z\mapsto
\overline z + \frac12$.  (See for example \cite[Chapter 12]{MR640091}
and \cite[\S4.1.2]{MR3316647} for the physical interpretation.) The
map $\pi\co z\mapsto 2z$ intertwines this $G$-action with the $G$-action
$\sigma\co z\mapsto \overline z$ on the same torus, which corresponds to the
species $2$ curve $E$.  In purely algebraic language, following the
notation in the proof of Proposition \ref{prop:Azellcurve}, let
$M=\{e,a,b,c\}$ be the $2$-torsion subgroup of $E_\bC$, where $e$ is
the identity element (corresponding to the point at infinity in the
Weierstra{\ss} form) and $e,c\in E(\bR)^+$, $a,b\in E(\bR)^-$.
Then the Galois involution $\sigma$ on $E_\bC$ fixes $M$ (pointwise),
and the Galois involution $\ts$ on $C_\bC$ is given by $z\mapsto
\sigma(z)+c$. The map from $C_\bC$ to its Jacobian can be identified
with $\phi\co z\mapsto T_z^*\cL\otimes \cL^{-1}$
(see \cite[Chapter II]{zbMATH05358709}), where $T_z$ is translation
by $z$ and $\cL$ is the line bundle of degree $2$ defined by the
divisor $[e]+[c]$. This divisor is $\ts$-invariant, so that $\cL$ is
defined over $\bR$, as is $\phi(z)$ for $z\in E(\bR)$.  Note that
the kernel of $\phi$ can be seen to be $M$, so again we see that the
Jacobian of $C$ is $C/M\cong E$, and $\phi$ can be identified with
$\pi\co C\to E$.
\end{proof}
\begin{proof}[Proof of \textup{Theorem \ref{thm:main}}]
Recall from \cite[Theorem 1.3.5]{MR2700538} that we may identify
$\cA$ with the endomorphism algebra bundle of $\cE$, where $\cE$ is a
rank-$2$ $\alpha$-twisted locally free sheaf over $E$. (We view
$\alpha$ as a class in $H^2_{\text{an}}(E, \cO_E^\times)$, cohomology
for the analytic topology; cf.\ \cite[Theorem 1.1.4]{MR2700538}.) Furthermore,
there is an equivalence of categories between coherent $\cA$-modules
and coherent $\alpha$-twisted sheaves over $E$, which we will use
hereafter without special comment.

With these preliminaries out of the way, we now define a twisted
bundle $\cP$ on $C\times E$, playing the same role as the Poincar\'e 
bundle in Mukai duality \cite{MR607081}, that will implement the derived
equivalence $\cD(C)\xrightarrow{\cong}
\cD(\cA)\cong \cD(E,\alpha)$, following the paradigm in 
\cite{MR1921811,MR1998775} and \cite{MR2700538,MR1887894}. 
Identify $C_\bC$ and $E_\bC$ as usual and let $\cP_0$ be the 
Poincar\'e bundle on $C_\bC \times E_\bC$, which makes sense
since $E_\bC$ is the Jacobian variety of $C_\bC$. In other words, for $x
\in E_\bC$, the restriction of $\cP_0$ to $C_\bC \times \{x\}$ is 
the line bundle $\cV_x$ defined by $x$ over $C_\bC$.  According to the
the recipe in \cite[Chapter II, \S8]{zbMATH05358709}, $\cP_0$
is defined by the divisor
\[
D_0 =[\Delta] - [C_\bC \times \{e\}] - [\{e\}\times E_\bC ],
\]
where $\Delta$ is the \emph{anti-diagonal} $\{(x,-x):x\in C_\bC \}$.
Note that $\Delta$ can also be described as $m^{-1}(e)$, where $m$ is
the additive group law on $C_\bC = E_\bC$. Now $\sigma\times \sigma$
obviously maps $\Delta$ to itself, so the Galois involution 
$\ts\times \sigma$ on $C_\bC \times E_\bC$
sends $\Delta$ to $\Delta+(c,e)$ and \emph{vice
  versa}. Similarly, $\ts\times \sigma$ maps 
$C_\bC \times \{e\}$ to itself,
and interchanges $\{e\}\times E_\bC$ and $\{c\}\times E_\bC$. 
So the divisor $D_0$ is \emph{not} invariant under $G$ and the
conjugate bundle $\overline{\cP_0}$ is associated to the
divisor
\[
(\ts\times \sigma)(D_0) =[\Delta+(c,e)] - [C_\bC \times \{e\}] -
  [\{c\}\times E_\bC ]. 
\]
Let $\cP=\cP_0\oplus \overline{\cP_0}$.  Even though the individual
summands $\cP_0$ and $\overline{\cP_0}$ are not defined over $\bR$,
$\cP$ is defined over $\bR$, with $G$ interchanging the two summands. For $x
\in E_\bC$, the restriction of $\cP$ to $C_\bC \times \{x\}$ is 
$\cV_x\oplus \overline{\cV_x}$, where $\cV_x$ defined by the element
$x$ of the Jacobian of $C_\bC$.  In other words,
$\cV_x=\phi(z)=T_z^*\cL\otimes \cL^{-1}$, where $2z=-x$
and $z$ is uniquely defined modulo $M$. Since $\cP$ is defined over
$\bR$, it is an $\cO_{C\times E}$-module.  (Note that $\cO_{C\times E}=
\cO_C\boxtimes\cO_E$, the external tensor product.)
We just need to explain how to make $\cP$ into an
$\cO_C\boxtimes\cA$-module. Since $\cP = \cP_0\oplus \overline{\cP_0}$, 
multiplication by $i$ on $\cP$ acts by the
matrix $\cI=\begin{pmatrix}i&0\\0&-i\end{pmatrix}$.
Recall that the gerbe $\alpha$
defining $\cA$ is trivial on $E(\bR)^+$ but nontrivial on
$E(\bR)^-$. At the same time, for $x\in E(\bR)^+$, the associated line
bundle $\cV_x$ is defined over $\bR$. (The divisor of $\cV_x$ is 
$\mu = D_0\cap (C_\bC\times \{x\}) = [-x] - [e]$. This divisor is not
$G$-invariant, but it's rationally equivalent to
$[z] + [z+c] - [e] - [c]$, which \emph{is} 
$G$-invariant.  Here we choose $z\in E(\bR)$
with $2z=-x$, possible since  $x\in E(\bR)^+$.)
So we get a vector bundle isomorphism
$\cV_x\to \overline{\cV_x}$ and thus an endomorphism $J$ of
$\cV_x\oplus \overline{\cV_x}$ anticommuting with $\cI$ and with $J^2=1$. Since
$\cI$ and $J$ satisfy the relations for a split quaternion algebra
over $\bR$, $C_\bC \times E(\bR)^+$ is a module for
$(\cO_E\boxtimes \cA)\cong (\cO_E\boxtimes M_2(\cO_C))$ over $C_\bC \times
E(\bR)^+$. For $x\in E(\bR)^-$, the associated line
bundle $\cV_x$ is not defined over $\bR$, but as explained in the
proof of \cite[Proposition 2.2]{MR631748}, there is an
isomorphism $\varphi_x\co \cV_x\to \overline{\cV_x}$ with $\varphi_x^2
= -1$. (In other words,
we get an endomorphism $J$ anticommuting with $\cI$ and with $J^2=-1$.)
This gives $\cV_x\oplus \overline{\cV_x}$ a quaternionic structure. 
In fact (after renormalizing)
we can make this structure algebraic in the variable $x$.
To see this, again change the divisor $\mu$ of $\cV_x$ up to
rational equivalence to the form $\mu' = [z] + [z+c] - [e] - [c]$ with
$2z=-x$.  Since $x\in E(\bR)^-$, $z$ cannot be chosen in $E(\bR)$.
But $\ts(\mu') = [\sigma(z)+c] + [\sigma(z)] - [c] - [e]$, and
$\mu-\ts(\mu) = [z] - [\sigma(z)] + [z+c] - [\sigma(z)+c]$ is a
principal divisor, in fact associated to the meromorphic function
$f\co w\mapsto \sn(w-z)$, where $\sn$ is the standard Jacobi
elliptic function with
parameters $K=\frac{1}{4}$, $K'=\frac{i\tau_0}{2}$
\cite[\S22.4]{NIST:DLMF}.  This is obviously meromorphic in $z$ and
thus in $x$, and as indicated in \cite{MR631748}, multiplication by
this function implements the quaternionic structure. Indeed,
$w\mapsto \overline{\sn(\ts(w)-z)}$ is also meromorphic in $w$,
and taking $w=z+K$ and using the fact that $\bar z - z = -iK'$
(since $2z\in E(\bR)^-$),
\[
\begin{aligned}
  \sn(w-z) &= \sn(K) = 1,\\
  \overline{\sn(\ts(w)-z)} &= \overline{\sn(\bar w + 2K - z)}\\
  &= \overline{\sn(3K -iK')} = - \overline{\sn(K + iK')} =
  \tfrac{-1}{k} < 0.
\end{aligned}
\]
(The negative sign of the product, what is called $\mathbf{N}f$ in
\cite{MR631748}, is what counts here.) Thus $f$ makes
the restriction of $\cP$ to $C_\bC \times E(\bR)^-$ into a module for
$(\cO_E\boxtimes \cA)\cong (\cO_E\boxtimes \cO_C\otimes \bH)$ over $C_\bC \times
E(\bR)^-$. These $\cO_E\boxtimes \cA$-module structures
on $\cP$ extend to open
neighborhoods of $C_\bC \times E(\bR)^+$ and $C_\bC \times E(\bR)^-$
(in the analytic topology) which, together, cover $C_\bC \times E_\bC$.
Patching these together, we obtain an $\alpha$-twisted sheaf
structure on $\cP$ over $C\times E$.  By the usual process, we
obtain a functor 
\begin{equation}
\Phi\co \cD(C) \to \cD(\cA), \quad
\Phi(\cF) = \mathbf{R}(\pi_E)_*\bigl(\cP\overset{\mathbf{L}}{\otimes}
\mathbf{L}(\pi_C)^*\cF\bigr),
\label{eq:Mukaifunc}
\end{equation}
and a similar functor $\Psi\co \cD(\cA) \to \cD(C)$ obtained by
reversing the roles of $E$ and $C$.

We can now prove the theorem by using \cite[Lemma 2.12]{MR1921811},
modified in the obvious way to deal with twisted sheaves.  This
asserts that the property of $\Phi$ being an equivalence is stable
under base change from $\bR$ to $\bC$.  But the complexification
$\cA_\bC$ of $\cA$ is $M_2(\cO_{E_\bC})$, Morita equivalent
to $\cO_{E_\bC}$, so $\cD(\cA_\bC)$ is equivalent to $\cD(E_\bC)$.
Hence $\Phi$ is an equivalence because
the functor $\Phi_\bC\co \cD(E_\bC) \to \cD(C_\bC)$ is the equivalence given
by Mukai duality \cite{MR607081}.
\end{proof}
\begin{remark}
\label{rem:Antieau}
To link Theorem \ref{thm:main} to the results of \cite{MR1887894} and
\cite{2014arXiv1409.2580A}, note that our genus-$1$ curve $C$ can be
viewed as an elliptic fibration over $\Spec\bR$ with Jacobian $E$,
but without a section, and also as
a torsor over the elliptic curve $E$. To see this,
consider the translation action $m$ of $E_\bC$ on $C_\bC$
(say on the right):
$C_\bC\times E_\bC\xrightarrow{m} C_\bC$, $m(y,x) = y+x$.  This action is
equivariant for the Galois action, since 
\[
m\bigl((\ts\times \sigma)(y,x)\bigr) =
m\bigl(\sigma(y)+c,\sigma(x)\bigr)
= \sigma(y) + c + \sigma(x) = \sigma(y+x) + c = \ts\bigl(m(y,x)\bigr).
\]
Thus we can think of $C$ as a principal homogeneous space for $E$.
In \cite{2014arXiv1409.2580A}, derived equivalences of Brauer twists
for  principal homogeneous spaces over an elliptic curve are studied,
but Theorem 5.1 in that paper, for example, only deals with the case
where the Brauer twist comes from the Brauer group of the field,
$\Br\bR$ in our case. In our situation, $E$ is of course a trivial
torsor over itself, while $C$ has a nontrivial class in
$H^1_{\text{\'et}}(\Spec \bR, E)=H^1(G, E(\bC))\cong \bZ/2$.
(Recall the proof of Proposition \ref{prop:Azellcurve}.)
So our twist $\alpha$ comes from the cokernel of the map
$\Br\bR\to\Br E$, which in the language of Ogg-Shafarevich theory
(see \cite[\S6]{MR1887894}) is the Tate-Shafarevich group
$\sha_{\Spec\bR}(E)\cong \bZ/2$.  Thus it seems that Theorem
\ref{thm:main} is covered neither by the hypotheses in
\cite{2014arXiv1409.2580A} nor by the hypotheses of \cite{MR1887894},
which only deals with algebraically closed fields.  Nevertheless,
the proof of \cite[Theorem 5.1]{MR1887894} could be adapted to give
another proof of Theorem \ref{thm:main}, using the \'etale topology on
the base $\Spec \bR$.
\end{remark}

\bibliographystyle{hplain}
\bibliography{categoricalTduality}
\end{document}